\newtheorem{thm1}{Theorem}
\newtheorem{theorem}{Theorem}[section]
\newtheorem{lemma}[theorem]{Lemma}
\newtheorem*{claim*}{Claim}
\theoremstyle{definition}
\newcommand\Hy{\mathbb{H}}
\newcommand\isom{\operatorname{Isom}}
\newcommand\Z{\mathbb{Z}}
\newcommand\Q{\mathbb{Q}}
\newcommand\R{\mathbb{R}}
\newcommand\C{\mathbb{C}}
\newcommand\Orb{\mathcal{O}}
\newcommand\Ball{\mathrm{B}(r,x)}
\newcommand\diam{\mathrm{diam}}
\newcommand\vol{\mathrm{vol}}
\DeclareMathOperator{\GL}{GL}
\newcommand{\co}{\colon\thinspace}
\begin{document}
\title{A bound for diameter of arithmetic hyperbolic orbifolds}


\author{Mikhail Belolipetsky}
\address{
IMPA\\  
Estrada Dona Castorina, 110\\
22460-320 Rio de Janeiro, Brazil}
\email{mbel@impa.br}

\begin{abstract}
Let $\Orb$ be a closed $n$-dimensional arithmetic (real or complex) hyperbolic orbifold. We show that the diameter of $\Orb$ is bounded above by
$$\frac{c_1\log\vol(\Orb) + c_2}{h(\Orb)},$$ where $h(\Orb)$ is the Cheeger constant of $\Orb$, $\vol(\Orb)$ is its volume, and constants $c_1$, $c_2$ depend only on $n$. 

\end{abstract}

\maketitle

\section{Introduction}
Let $V$ be a closed real or complex hyperbolic manifold of dimension $n \ge 4$. In \cite{BS87}, Burger and Schroeder proved that $\diam(V) \le \frac{1}{\lambda_1(V)}(\beta_n\log\vol(V)+\alpha_n)$, where $\lambda_1(V)$ is the first non-zero eigenvalue of the Laplacian on $V$ and the constants $\alpha_n$, $\beta_n$ depend only on $n$. It is well known that this inequality does not hold for hyperbolic $2$ and $3$--manifolds (see Remark~(v) in \cite{BS87}). In this article we prove that an inequality of this type still holds if we restrict to \emph{arithmetic} $2$ or $3$--manifolds and extend it to arithmetic orbifolds. Somewhat similar phenomenon is observed for volumes of orbifolds --- compare the discreteness theorems of Wang \cite{Wa72} and Borel \cite{Bor81}. Indeed, the volume formulas of Borel and Prasad play a role in the present argument as well. They enter in a key step of the proof, when we relate the orbifold injectivity radius and the volume. As a result, we show that an arithmetic orbifold $\Orb$ can have a small injectivity radius only if its volume is very large. After this we use an argument due to Brooks \cite{Br92}, which we extend here to orbifolds. The latter extension again requires arithmeticity assumption, which allows us to bound the order of singularities in terms of $\vol(\Orb)$. Dealing with volumes and areas in the presence of singularities requires some results from geometric measure theory. The main theorem of the paper is stated as follows: 

\begin{thm1}\label{thm1}
Let $\Orb$ be a closed arithmetic (real or complex) hyperbolic orbifold of dimension $n\ge 2$. Then the diameter $\diam(\Orb)$, the Cheeger constant $h(\Orb)$ and the volume $\vol(\Orb)$ satisfy the inequality
$$ \diam(\Orb) \le \frac{c_1\log\vol(\Orb)+c_2}{h(\Orb)} $$
with constants $c_1$, $c_2$ depending only on $n$.
\end{thm1}

In dimensions $2$ and $3$ this result is new for orbifolds and for manifolds. In dimensions $n \ge 4$ it is new for orbifolds while for manifolds a similar bound with $h(\Orb)^2$ in the denominator follows from \cite[Theorem~2]{BS87} and Cheeger's inequality $\lambda_1(\Orb) \ge \frac14 h(\Orb)^2$. As an example of a real hyperbolic $n$--orbifold one can consider a sphere $S^n$ with a sufficiently complicated singular set. For example, a figure--$8$ knot in $S^3$ with singularity of order $2$ gives an arithmetic $3$--orbifold. It is not known if arithmetic hyperbolic orbifolds with underlying space $S^n$ exist for large $n$. 

An interesting class of spaces to which Theorem~\ref{thm1} applies are the \emph{congruence arithmetic hyperbolic orbifolds}. These, in particular, include arithmetic orbifolds whose groups are maximal discrete subgroups. The first non-zero eigenvalue of the Laplacian for these orbifolds is uniformly bounded below by a constant which depends only on the dimension (see \cite{BS91} for the real hyperbolic case and \cite{Cl03} in general). By Buser's inequality (generalized to orbifolds) it implies a uniform lower bound for the Cheeger constant \cite{Bus82}. Hence in this case we have an inequality 
$$\diam(\Orb) \le c_1\log\vol(\Orb) +c_2.$$

The methods of this paper can be applied to the other locally symmetric spaces. The higher rank irreducible locally symmetric spaces as well the quaternionic and Cayley hyperbolic orbifolds have Property~T, and thus we can expect here to have a uniform  upper bound for the diameter of the form $c_1\log\vol(\Orb) +c_2$ (cf. \cite[Theorem~1(2)]{BS87} for the quaternionic and Cayley hyperbolic spaces). We leave the details of the general case for a future investigation, in this paper we focus on the real and complex arithmetic hyperbolic orbifolds. 

The paper is organized as follows. In Section 2 we collect various results about metric properties of arithmetic orbifolds. In Section~3 we prove bounds for injectivity radius and volume of small balls in arithmetic hyperbolic orbifolds and finish with the proof of Theorem~\ref{thm1}.

\section{Preliminaries}

\subsection{Hyperbolic orbifolds}
Let $\Gamma$ be a group of isometries of a Riemannian manifold $X$ acting properly discontinuously. If $\Gamma$ is torsion-free, then the quotient space $X/\Gamma$ has a structure of a Riemannian manifold. More generally, the group $\Gamma$ may have finite point-stabilizers, and $X/\Gamma$ is endowed a structure of a \emph{(good) Riemannian orbifold}. An orbifold has an atlas of maps locally identifying it with a quotient of an open set in $X$ by a finite group of isometries. The term orbifold was coined by W.~Thurston in late 1970's. A similar concept was introduced by Satake in \cite{Sat56}, where he used the term \emph{V-manifold}. 

In this paper $X$ will be always a real or complex hyperbolic space: $X = \Hy^n_\R$ or $\Hy^n_\C$.
\subsection{Geodesics and diameter}
Metric properties of Riemannian orbifolds were studied by Borzellino in his thesis \cite{Bor92}. In particular, he showed that there is a natural distance $d$ on $\Orb$ with which $(\Orb,d)$ becomes a length space, and if $(\Orb, d)$ is complete, any two points can be joined by a minimal geodesic realizing the distance between them \cite[Theorem~40, p.~21]{Bor92}. The singular set $\Sigma$ of a good Riemannian orbifold $\Orb$ is locally convex \cite[Proposition~1, p.~31]{Bor92}. Moreover, a geodesic segment cannot
pass through $\Sigma$ unless it starts and/or ends there (see \cite[Theorem~3, p.~32]{Bor92}). A consequence of this fact is that the complement of $\Sigma$ in $\Orb$ is convex as all points in $\Orb\setminus\Sigma$ can be joined by some segment, and $\Sigma$ cannot disconnect $\Orb$.
We define the \emph{diameter} of $\Orb$ as the supremum of the distances between points in $\Orb$. For a closed $\Orb$ the supremum is achieved and there is a geodesic joining $x, y \in \Orb$ whose length is equal to $\diam(\Orb)$.

\subsection{Injectivity radius}
Let $\Orb =  X/\Gamma$ be a closed hyperbolic orbifold and let $\pi \co X \to \Orb$ be the covering map. The elements of the group $\Gamma$ fall into two types: \emph{elliptic} are those which have fixed points in $X$ and \emph{hyperbolic} are those which act freely. For a hyperbolic isometry $\gamma \in \Gamma$ its \emph{displacement at $x \in X$} is defined by $\ell(\gamma, x) = d(x, \gamma x)$ and the \emph{displacement} of $\gamma$ (also called its \emph{translation length}) is 
\[\ell(\gamma) = \inf_{x \in X} \ell(\gamma, x).\]
It is equal to the displacement of $\gamma$ at the points of its \emph{axis}. We will define the orbifold \emph{injectivity radius} by $r_{\mathrm{inj}}(\Orb) = \inf \{\frac12 \ell(\gamma)\}$, where the infimum is taken over all hyperbolic elements $\gamma \in \Gamma$. It is equal to half of the smallest length of a closed geodesic in $\Orb$. When $\Orb$ is a manifold, this definition is equivalent to the usual definition of the injectivity radius as the supremum of $r$ such that any point $p\in\Orb$ admits an embedded ball $\mathrm{B}(p, r) \subset \Orb$. This is not the case in general; the points in the singular set only admit embedded folded balls (cf. \cite{Sam13}).

\subsection{Cheeger's constant} 

We define the \emph{Cheeger constant} $h(\Orb)$ of a closed orbifold $\Orb$ by 
$$ h(\Orb) := \inf \Big( \frac{\mathrm{area}(\partial A)}{\min\{ \vol(A), \vol(\Orb\setminus A)\}}\Big),$$
where $A \subseteq X$ is an open subset with Hausdorff measurable boundary $\partial A$.

By Toponogov's theorem for orbifolds \cite[Theorem~1, p.~28]{Bor92}, a hyperbolic $n$--orbifold is an Alexandrov space with curvature bounded from below. In \cite{OS94}, Otsu and Shioya proved that the singular set $\Sigma$ has Hausdorff dimension $\le n-1$. Moreover, they showed that there exists a $C^0$--Riemannian structure on $\Orb\setminus\Sigma \subset \Orb$ satisfying the following:\
\begin{itemize}
\item[(1)] There exists $\Orb_0 \subset \Orb\setminus\Sigma$ such that $\Orb \setminus \Orb_0$ is of $n$-dimensional Hausdorff measure zero and that the Riemannian structure is $C^{1/2}$--continuous on $\Orb_0$.
\item[(2)] The metric structure on $\Orb\setminus \Sigma$ induced from the Riemannian structure coincides with the original metric of $\Orb$.
\end{itemize}
It follows that we can also compute the volumes in the definition of Cheeger's constant using the  Otsu--Shioya metric. Concerning the area of the boundaries, when $\Orb$ is orientable the singular set has codimension $2$ and we can again use the metric. When $\Orb$ is non-orientable the singular hypersubsets belong to the boundary of $\Orb$ and so do not enter into the formula. We will not use these facts in the proof of the theorem but they are important for understanding the metric structure of hyperbolic orbifolds. 

\subsection{Arithmetic orbifolds} Let $H$ be a linear semisimple Lie group with trivial center, in our case we have $H = \mathrm{Isom}(X)$. Let $\mathrm{G}$ be an algebraic group defined over a number field $k$ such that $\mathrm{G}(k \otimes_\Q \R)$ is isogenous to $H\times K$, where $K$ is a compact Lie group. Consider a natural projection $\phi \co\mathrm{G}(k \otimes_\Q \R) \to H$. The image of the group of $k$-integral points $\mathrm{G}(\mathcal{O}_k)$ and all subgroups $\Gamma < H$ which are commensurable with it are called \emph{arithmetic subgroups of $H$ defined over $k$}. Borel and Harish--Chandra proved that arithmetic subgroups are lattices, i.e., they are discrete and have finite covolume in $H$ \cite{BHC62}. Their quotient spaces $\Orb = X/\Gamma$ are called \emph{arithmetic orbifolds}, they come together with an associated number field $k$, the \emph{field of definition}. Number--theoretical properties of the field $k$ can be used to extract important information about geometry of $\Orb$. This principle underlines the result of the present paper.

We refer to \cite{WM15} for a comprehensive introduction to the theory of arithmetic subgroups and their quotient spaces. 

\section{Proof of Theorem 1}

We first prove two lemmas. Similar results for real hyperbolic orbifolds were previously proved in \cite[Section~3]{AB19} (see also \cite{Bel10}). Here we extend them to the complex hyperbolic case. We repeat some details for the readers convenience. 

\begin{lemma}[A bound for injectivity radius] \label{lem1}
Given a closed $n$-dimensional arithmetic hyperbolic orbifold $\Orb = X/\Gamma$ of sufficiently large volume, we have 
\begin{equation*}
r_{\mathrm{inj}}(\Orb) \ge a_1\left(\frac{\log\log\log \vol(\Orb)^{b_1}}{\log\log \vol(\Orb)^{b_1}}\right)^3
\end{equation*}
with the constants $a_1, b_1 > 0$ depending only on $X$.
\end{lemma}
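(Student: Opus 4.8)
The plan is to locate a shortest closed geodesic, realised by a hyperbolic element $\gamma\in\Gamma$ with $\ell(\gamma)=2\,r_{\mathrm{inj}}(\Orb)$, and to bound $\ell(\gamma)$ from below using the arithmetic of its eigenvalues together with a transcendence-theoretic lower bound for Mahler measures. First I would pass from the isometry $\gamma$ to an eigenvalue $\lambda$ of a matrix representing it in the standard linear representation of $\mathrm{G}$; in both the real and the complex case the relevant eigenvalue satisfies $|\lambda|=e^{\ell(\gamma)/2}$, so that $\log|\lambda|$ is comparable to $r_{\mathrm{inj}}(\Orb)$. Since $\gamma$ is hyperbolic it has infinite order, hence $\lambda$ is an algebraic number which is \emph{not} a root of unity.

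The key arithmetic input is that $\lambda$ satisfies a monic polynomial with coefficients in $\cO$ (respectively in the ring of integers of the associated CM field $\ell$), so $\lambda$ is an algebraic integer of degree $d'=\deg\lambda\le C[k:\Q]$. Crucially, cocompactness and arithmeticity force the $k$-form of $\mathrm{G}$ to be anisotropic and \emph{definite} at every archimedean place other than the one giving $X$; at those places the relevant group is compact, so every Galois conjugate $\lambda^{\sigma}$ arising from such a place lies on the unit circle. Consequently the Mahler measure $M(\lambda)=\prod_{\sigma}\max(1,|\lambda^{\sigma}|)$ receives contributions only from the finitely many (dimension-dependent, in fact essentially one) places lying over the defining embedding, which gives $\log M(\lambda)\le C'\,\ell(\gamma)$. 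I would then invoke Dobrowolski's theorem: for $\lambda$ not a root of unity,
$$ \log M(\lambda)\ \ge\ c_0\left(\frac{\log\log d'}{\log d'}\right)^3, $$
and combining the two estimates yields $r_{\mathrm{inj}}(\Orb)=\tfrac12\ell(\gamma)\ge c_1'\bigl(\log\log d'/\log d'\bigr)^3$ after absorbing constants.

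It remains to bound the degree $d'$, hence $[k:\Q]$, in terms of the volume. Here the Borel--Prasad volume formula supplies a lower bound $\vol(\Orb)\ge |D_k|^{c_n}$ with $c_n>0$ depending only on $n$ (the discriminant of $k$ enters to a positive power proportional to $\dim\mathrm{G}$), while Minkowski's discriminant bound gives $|D_k|\ge c_2^{[k:\Q]}$ with $c_2>1$; together these force $d'\le C''\log\vol(\Orb)$. Since $x\mapsto(\log\log x/\log x)^3$ is eventually decreasing, substituting this upper bound for $d'$ and choosing $b_1=C''$ turns the previous estimate into
$$ r_{\mathrm{inj}}(\Orb)\ \ge\ a_1\left(\frac{\log\log\log\vol(\Orb)^{b_1}}{\log\log\vol(\Orb)^{b_1}}\right)^3, $$
as claimed, provided $\vol(\Orb)$ is large enough that all the iterated logarithms are positive and the monotonicity applies.

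I expect the main obstacle to be the arithmetic set-up in the complex hyperbolic case: identifying the correct number field (the CM extension $\ell/k$), verifying that the chosen eigenvalue $\lambda$ is an algebraic integer of degree $O([k:\Q])$, that it is not a root of unity, and---most importantly---that all of its conjugates at the compact places genuinely have absolute value $1$, so that $M(\lambda)$ is controlled by $\ell(\gamma)$ alone. The real case is treated in \cite{AB19}; extending every step, in particular the eigenvalue/translation-length dictionary and the counting of archimedean contributions to $M(\lambda)$, to the $\mathrm{SU}(n,1)$ setting is the technical core of the argument.
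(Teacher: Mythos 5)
Your proposal follows essentially the same route as the paper: relate the translation length of a hyperbolic element to the Mahler measure of an associated integral polynomial (using compactness of the group at the other archimedean places to force all remaining conjugates onto the unit circle), apply Dobrowolski's theorem, and bound the degree $\deg(k)$ by $O(\log\vol(\Orb))$ via the Borel--Prasad volume formula together with Minkowski's discriminant bound. The only cosmetic difference is that the paper packages the estimate through the characteristic polynomial of a lift $\tilde{\gamma}$ to a $\Q$-group $H^\circ\times K$ obtained by restriction of scalars, rather than through the minimal polynomial of a single eigenvalue $\lambda$ and its Galois conjugates; the underlying mechanism is identical.
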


\begin{proof}
Let $\gamma \in \Gamma$ be a hyperbolic transformation.  The eigenvalues of $\gamma$ considered as an element of $\mathrm{SL}(n+1, \C)$ are $e^{\pm c_1\ell(\gamma)}$ and $n-1$ eigenvalues whose absolute value is $1$ with $c_1 = c_1(X)$ (for the real hyperbolic case it is by \cite[Proposition~1(1,4)]{Gr62}, and for the complex hyperbolic case it follows from \cite[Section~3.3.3]{Gol99}).

We would like to relate $e^{c_1\ell(\gamma)}$ to the Mahler measure of a certain polynomial naturally associated to $\gamma$. To this end we can adapt the argument of \cite[Section~10]{Gel04}. Let $H^\circ$ be the identity component of the group $H = \isom(X)$. It is center-free and connected so we can identify it with its adjoint group $\mathrm{Ad}(H^\circ) \le \mathrm{GL}(\mathfrak{g})$, where $\mathfrak{g}$ denotes the Lie algebra of $H$. We have $\Gamma' = \Gamma\cap H^\circ$, a cocompact arithmetic lattice, and $\gamma^2 \in \Gamma'$. Since $\Gamma'$ is arithmetic, there is a compact extension $H^\circ\times K$ of $H^\circ$ and a $\Q$--rational structure on the Lie algebra $\mathfrak{g}\times\mathfrak{k}$ of $H^\circ\times K$, such that $\Gamma$ is the projection to $H^\circ$ of a lattice $\tilde{\Gamma}$, which is contained in $(H^\circ\times K)_\Q$ and commensurable to the group of integral points $(H^\circ \times K)_\Z$ with respect to some $\Q$--base of $(\mathfrak{g}\times\mathfrak{k})_\Q$. By changing the $\Q$--base we can assume that $\tilde{\Gamma}$ is contained in $(H^\circ \times K)_\Z$. Thus the characteristic polynomial $P_{\tilde{\gamma}}$ of any $\tilde{\gamma} \in \tilde{\Gamma}$ is a monic integral polynomial of degree at most $m\deg(k)$, where $k$ is the field of definition of the arithmetic group and the dimension bound $m$ depends only on the type of $H$. Since $K$ is compact, any eigenvalue of $\tilde{\gamma}$ with absolute value different from $1$ is also an eigenvalue of its projection to $H^\circ$. Therefore we have 
$M(P_{\gamma^2}) = M(P_{\tilde{\gamma}^2})$, and hence
\begin{equation*}
e^{c_1\ell(\gamma^2)} = M(P_{\tilde{\gamma}^2}), 
\end{equation*}
where the \textit{Mahler measure} of an integral monic polynomial $P(x)$ of degree $d$ is defined by 
\[ M(P) = \prod_{i=1}^d \mathrm{max}(1, |\theta_i|),\]
where $\theta_1$,\ldots, $\theta_d$ are the roots of $P(x)$.

Hence we have 
\begin{equation*}\label{sec2:eq1}
\ell(\gamma) \ge \frac{1}{2c_1}\log M(P_{\tilde{\gamma}^2}).
\end{equation*}
This implies that $r_{\mathrm{inj}}(\Orb) \ge \min\{\frac{1}{4c_1} \log M(P_{\tilde{\gamma}}) \}$, where the minimum is taken over all $\tilde{\gamma} \in \tilde{\Gamma}$ which project to hyperbolic elements in $\Gamma'$. Moreover, our argument shows that the degrees of the irreducible integral monic polynomials whose Mahler measures appear in this bound satisfy
\begin{equation*}\label{sec2:eq2}
d \le m\deg(k).
\end{equation*}

By Lehmer's conjecture the Mahler measures of non-cyclotomic polynomials are expected to be uniformly bounded away from~$1$. A special case of this conjecture also known as the Margulis conjecture implies a uniform lower bound for the lengths of closed geodesics of arithmetic locally symmetric $n$--dimensional manifolds (see \cite[Section~10]{Gel04}). These conjectures have attracted a lot of interest but still remain wide open. For our estimate we will take advantage of the known quantitative number-theoretical results towards Lehmer's problem.

In \cite{Dob79}, Dobrowolski proved the following lower bound for the Mahler measure:
\begin{equation}\label{sec2:eq3}
\log M(P) \ge c_2\left(\frac{\log\log d}{\log d}\right)^3,
\end{equation}
where $d$ is the degree of the polynomial $P$ and $c_2>0$ is an explicit constant.

We can relate the degree $d$ to the volume by using an important inequality relating the volume of a closed arithmetic orbifold and the degree of its field of definition:
\begin{equation}\label{sec2:eq4}
\mathrm{deg}(k) < c_3\log\vol(\Orb) + c_4.
\end{equation}
For hyperbolic orbifolds of real dimension $n \ge 4$ this inequality follows from \cite[Section~3.3]{Bel07} and Minkowski's bound for discriminant. In dimensions $2$ and $3$ this inequality is a result of Chinburg and Friedman \cite{CF86}, and in the form stated here it can be found in \cite[Section~3]{BGLS10}. In all the cases the proofs are based on the volume formulae for arithmetic quotient spaces. 

For sufficiently large $x$ the function $\frac{\log x}{x}$ is monotonically decreasing, hence for sufficiently large volume we obtain the inequality of the lemma with $a_1 = \frac{c_2}{4c_1}$ and $b_1 = c_3$.
\end{proof}

The points of orbifold at which the injectivity radius is small form a \emph{thin part}. The Margulis lemma implies that hyperbolic orbifolds also have \emph{thick part} where the injectivity radius is bigger than the Margulis constant. We will use Margulis' lemma in the form given in \cite[Section~2.2]{Sam13}. Let $\mu_n > 0$ and $m_n\in \mathbb{N}$ be the constants defined there which depend only on the space $X$, and let  $\varepsilon = \min \{\frac{\mu_n}{2}, \frac{r_{\mathrm{inj}}(\Orb)}{4m_n}\}$. 

\begin{lemma}[A bound for the volumes of balls] \label{lem2}
A ball of radius $r \le \varepsilon$ in an arithmetic hyperbolic orbifold $\Orb = X/\Gamma$ has volume 
$$\vol(\Ball) \ge \frac{v_r}{a_2 \log\vol(\Orb) + b_2},$$
where $v_r$ denotes the volume of a ball of radius $r$ in $X$ and the constants $a_2, b_2 > 0$ depend only on $X$.
\end{lemma}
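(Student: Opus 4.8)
The plan is to compare the orbifold ball with a genuine ball in the universal cover $X$ and to show that the only loss of volume comes from a finite isotropy group whose order is controlled by the field of definition. First I would fix $x\in\Orb$, choose a preimage $\tilde x\in X$, and let $F$ be the subgroup of $\Gamma$ generated by the elements $\gamma$ with $d(\tilde x,\gamma\tilde x)<2r$; these are exactly the $\gamma$ for which $\gamma\,\mathrm{B}(r,\tilde x)$ meets $\mathrm{B}(r,\tilde x)$. Since $r\le\varepsilon\le\mu_n/2$, each such $\gamma$ displaces $\tilde x$ by less than the Margulis constant, so by the Margulis lemma in the form of \cite[Section~2.2]{Sam13} the group $F$ is virtually nilpotent. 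Because $r\le\varepsilon\le r_{\mathrm{inj}}(\Orb)/(4m_n)<r_{\mathrm{inj}}(\Orb)$ and every hyperbolic element satisfies $d(\tilde x,\gamma\tilde x)\ge\ell(\gamma)\ge 2r_{\mathrm{inj}}(\Orb)$, none of the generators of $F$ is hyperbolic; hence $F$ consists of elliptic elements, is finite, and fixes a common point of $X$, which we may take to be $\tilde x$. The local model of $\Orb$ at $x$ is then $\mathrm{B}(r,\tilde x)/F$, and since the quotient map is an isometry off a set of measure zero, $\vol(\Ball)=v_r/|F|$. This reduces the lemma to the bound $|F|\le a_2\log\vol(\Orb)+b_2$.

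Next I would bound $|F|$ using arithmeticity, reusing the integral model built in the proof of Lemma~\ref{lem1}. As an isometry fixing $\tilde x$, every element of $F$ lies in the maximal compact subgroup of the point stabilizer, isomorphic to $\Ort(n)$ in the real and $\mathrm{U}(n)$ in the complex case, and so has eigenvalues that are roots of unity. Passing to the lattice $\tilde\Gamma\subset(H^\circ\times K)_\Z$, each $\tilde\gamma$ has an integral monic characteristic polynomial of degree at most $m\deg(k)$; thus if $\gamma\in F$ has order $N$, then a primitive $N$-th root of unity is a root of this polynomial, whence $\varphi(N)\le m\deg(k)$, and since $N\le C\,\varphi(N)\log\log N$ the order of every element of $F$ is bounded by a quantity of the same order as $\deg(k)$. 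It then remains to pass from element orders to the group order: as $F$ is a finite subgroup of the fixed compact group $\Ort(n)$ or $\mathrm{U}(n)$, Jordan's theorem supplies an abelian normal subgroup of index bounded in terms of $n$ alone, and a finite abelian subgroup whose elements have bounded order is controlled by the degree of the cyclotomic field that its traces generate, which is again at most a bounded multiple of $\deg(k)$. In low dimensions this is especially transparent, since a finite subgroup of $\SO(2)$ or $\SO(3)$ fixing a point is cyclic, dihedral, or one of the bounded exceptional groups, so $|F|$ is linear in the maximal element order.

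Finally I would substitute the inequality $\deg(k)<c_3\log\vol(\Orb)+c_4$ from \eqref{sec2:eq4} into the resulting bound on $|F|$, producing constants $a_2,b_2>0$ depending only on $X$ with $|F|\le a_2\log\vol(\Orb)+b_2$, and hence $\vol(\Ball)=v_r/|F|\ge v_r/\bigl(a_2\log\vol(\Orb)+b_2\bigr)$. The hard part will be the arithmetic bound on $|F|$: one must control the order of the isotropy group uniformly by $\deg(k)$, rather than by the far weaker Minkowski bound for finite subgroups of $\GL_{m\deg(k)}(\Z)$, which is super-polynomial in $\deg(k)$ and useless here. The gain comes entirely from the fact that $F$ sits inside the fixed-dimensional compact stabilizer, so that only boundedly many element orders and an abelian part of bounded rank can occur; establishing that this forces the clean linear shape claimed in the statement, uniformly in the dimension, is the crux. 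A secondary technical point is to make the identity $\vol(\Ball)=v_r/|F|$ precise in the Otsu--Shioya measure on $\Orb\setminus\Sigma$, but since $\Sigma$ has measure zero this reduces to the local quotient computation of the first paragraph.
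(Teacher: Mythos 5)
Your overall strategy is the paper's: control the multiplicity of the covering map on a small ball by a finite local group whose order is bounded via arithmeticity and the degree--volume inequality \eqref{sec2:eq4}. But two load-bearing steps do not go through as written. First, the finiteness of $F=\Gamma_{2r}(\tilde x)$: from ``none of the generators of $F$ is hyperbolic'' you conclude that ``$F$ consists of elliptic elements, is finite, and fixes a common point of $X$, which we may take to be $\tilde x$.'' A group generated by elliptic elements need not consist of elliptic elements (a product of two elliptics is typically hyperbolic), so this deduction fails; moreover the common fixed point of a finite group of isometries, when it exists, need not be $\tilde x$, so the identity $\vol(\Ball)=v_r/|F|$ is unjustified --- only the inequality $\vol(\Ball)\ge v_r/|F|$, coming from the fact that any two preimages in $\mathrm{B}(r,\tilde x)$ of a point of $\Orb$ differ by an element of $F$, is true and is all that is needed. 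The correct route to finiteness --- and the reason $\varepsilon$ carries the factor $4m_n$ --- is the quantitative Margulis lemma \cite[Lemma~2.3]{Sam13}: if $\Gamma_{2\varepsilon}(\tilde x)$ were infinite, there would be an element of \emph{infinite order} displacing $\tilde x$ by less than $2m_n(2\varepsilon)\le r_{\mathrm{inj}}(\Orb)$, contradicting the definition of $r_{\mathrm{inj}}$. You cite the right form of the lemma but never use the $m_n$ factor.

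Second, the bound on $|F|$, which you yourself flag as the crux, is not established, and the mechanism you sketch for the final step cannot deliver what you claim: an abelian subgroup of $\mathrm{U}(n)$ all of whose elements have order at most $T$ can have order $T^{n}$ (a diagonal copy of $(\Z/T\Z)^{n}$), so bounding element orders by a multiple of $\deg(k)$ only gives $|F|$ polynomial in $\deg(k)$, and the appeal to ``the degree of the cyclotomic field that its traces generate'' does not repair this. The paper completes this step differently: it takes the abelian subgroup $A\le F$ of index at most $m_n$ supplied by \cite[Theorem~2.1]{Sam13}, decomposes the ambient space into at most $m$ common eigenspaces of $A$, embeds $A$ into a product of at most $m$ cyclic groups, bounds the order of each cyclic factor by $c_5\deg(k)^{2m}$ via the Galois-conjugacy argument for roots of unity, and concludes $|F|\le c_6\deg(k)^{c_7}$, which combined with \eqref{sec2:eq4} gives \eqref{eq5}. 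So the skeleton of your argument matches the paper's, but the two essential steps --- finiteness of the local group and the passage from element orders to the order of the group --- are respectively incorrect and missing as written.
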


\begin{proof}
We first bound the order $q$ of finite subgroups $F < \Gamma$ in terms of volume. We do this by applying Margulis lemma to the discrete subgroups of $\mathrm{O}(n)$ or $\mathrm{U}(n)$ as it was done in \cite{ABSW08} in the case of orthogonal groups. Consider a $k$--embedding of $\Gamma$ into $\mathrm{GL}(m, k)$ with $m = n+1$ if $X= \Hy^n_\C$ or $\Hy^n_\R$ with $n$ even, $m = 2(n+1)$ if $X = \Hy^n_\R$ with $n$ is odd and $\neq 7$, and $m = 24$ if $X = \Hy^7_\R$. The existence of such embedding follows from the classification of $\Q$--forms of the orthogonal and unitary groups (see \cite[Section~18.5]{WM15}).

Let $A\in \GL(m,k)$ be a torsion element of order $t\ge2$. Let
$\lambda_1, \ldots,\lambda_m$ be the eigenvalues of $A$. As $A$ is a matrix
over $k$, its eigenvalues split into groups of conjugates under the action of
$\mathrm{Gal}(\bar{k}/k)$. The equality $A^t= \mathrm{Id}$ implies that the eigenvalues are roots of
unity. Let $t_1, \ldots, t_l$ be their orders, so $t={\rm lcm}(t_1, \ldots,
t_l)$. If $\lambda$ is an eigenvalue then all its $\mathrm{Gal}(\bar{k}/k)$-conjugates
are also eigenvalues of $A$, which implies
$$
\phi_k(t_1)+\cdots+\phi_k(t_l) \le m,
$$
where $\phi_k(t)$ denotes a generalized Euler $\phi$-function defined as the
degree over $k$ of the cyclotomic extension $k(\mu_t)$, $\mu_t$ is a primitive
$t$--root of unity.

It is clear that the following inequalities are satisfied for $\phi_k(t)$ and the Euler
$\phi$-function:
$$
\phi(t)/\deg(k)\le \phi_k(t) \le \phi(t).
$$
We have
$$
\phi(t_i)\le m \deg(k), 
$$
which implies
$$
\phi(t) \le \phi(t_1)\cdots\phi(t_l) \le (m \deg(k))^m.
$$

By using the well known inequality $\phi(t)\ge \sqrt{t}/2$, we obtain
\begin{equation}
t\le 4 (m\deg(k))^{2m} \le c_5\deg(k)^{2m},
\end{equation}
where the constant $c_5$ depends only on $m$. 

This is a bound for the order of finite cyclic subgroups of $\Gamma$, which can be used to give an upper bound for $q$. 
By the Margulis lemma, the constant $m_{n}$ has a property that if $F < \mathrm{O}(n)$ or $\mathrm{U}(n)$ is a finite subgroup, then there is an abelian subgroup $A< F$ such that $[F:A] \leq
m_{n}$ (see \cite[Theorem~2.1]{Sam13}). We may find common
complex eigenspaces of the elements of $A$: $U_{1}, \ldots, U_{k}$, and real
eigenspaces $V_{1},\ldots,V_{l}$, where $k/2+l\leq m$, such that $A$ acts on
$U_{i}$ as a cyclic group $A_{i}$, and $A$ acts on $V_{i}$ as $\pm 1$. We may
embed $A$ in $\prod_{i=1}^{k} A_{i} \times \left(\Z/2\Z\right)^{l}$, acting on
$\prod_{i=1}^{k} U_{1} \times \prod_{j=1}^{l} V_{j}$.  Thus $|A| \leq
2^{l}\prod_{i=1}^{k} |A_{i}|$, and by the previous argument, $|A_{i}| \leq
c_{5}\deg(k)^{2m}$, since a generator of $A_{i}$ is the projection of an
element of $A$. Thus we obtain
\begin{equation}
 q \le m_{n}|A| \le c_6\deg(k)^{c_7}, 
\end{equation}
with the constants $c_6$, $c_7 > 0$ depending only on $m$. 

Together with \eqref{sec2:eq4} it implies 
\begin{equation}\label{eq5}
 q \le c_8 \log\vol(\Orb)^{c_7} + c_9, 
\end{equation}
with the constants depending only on $m$, and hence only on $X$. 

We now use the Margulis lemma once again to show that every $\varepsilon$--ball in $X$ maps to $\Orb$ with multiplicity at most $q$ at each point.  The constants $\mu_n$ and $m_n$ in the Margulis lemma have the following property.  For any $x \in X$ and any $t \in \mathbb{R}$, let $\Gamma_t(x)$ denote the subgroup of $\Gamma$ generated by the elements that move $x$ by distance less than $t$.   Then if $t \leq \mu_n$ and if $\Gamma_t(x)$ is infinite, there is an element in $\Gamma$ of infinite order that moves $x$ by distance less than $2m_nt$ \cite[Lemma~2.3]{Sam13}.  We have chosen $\varepsilon$ such that $2\varepsilon \leq \mu_n$ and $2m_n(2\varepsilon) \leq r_{\mathrm{inj}}(\Orb)$.  By definition, every element in $\Gamma$ of displacement less than $r_{\mathrm{inj}}(\Orb)$ has a fixed point and therefore has finite order.  Thus, $\Gamma_{2\varepsilon}(x)$ must be finite for all $x \in X$.  Let $x_1, \ldots, x_k$ be points in some $\varepsilon$--ball in $X$ that all map to the same point of $\Orb$.  Then they are all in the orbit of $x_1$ under $\Gamma_{2\varepsilon}(x_1)$, and $\Gamma_{2\varepsilon}(x_1)$ has order at most $q$ satisfying \eqref{eq5} as we have shown this for every finite subgroup of $\Gamma$. Hence $k \leq q$ and the multiplicity of the map is at most $q$ on the $\varepsilon$-ball. This finishes the proof of the lemma.
\end{proof}

\begin{proof}[Proof of the theorem] 
By the theorems of Wang and Borel \cite{Wa72, Bor81} there are finitely many arithmetic hyperbolic $n$-orbifolds of bounded volume. Hence we can assume that $\vol(\Orb)$ is sufficiently large and compensate the remaining ones by the additive constant. 

We now adapt the argument of \cite{Br92}. 

Let $r_{\mathrm{min}} =  a_1\left(\frac{\log\log\log \vol(\Orb)^{b_1}}{\log\log \vol(\Orb)^{b_1}}\right)^3$ be the bound for injectivity radius from Lemma~\ref{lem1}, and let $r_0 = {r_{\mathrm{min}}}/{4m_n}$. As $\vol(\Orb)$ is large, we have $r_0 \leq \varepsilon$ from Lemma~\ref{lem2}.
Pick a point $x$ in $\Orb$ and consider a ball $\Ball$ of radius $r$ about $x$. If $r \le r_0$, then the volume of the ball is estimated by Lemma~\ref{lem2}. We now want to estimate the volume $V(r) = \vol(\Ball)$ for $r > r_0$. By the coarea formula (cf. \cite[Section~3.2]{Fed69}) and the definition of the Cheeger constant we have 

$$\frac{V'(r)}{V(r)} \ge h(\Orb),$$
as long as $V(r) < \frac12 \vol(\Orb)$. 

By integrating we obtain
$$ V(r) \ge e^{h(\Orb)(r-r_0)} V(r_0) \text{ until } V(r) = \frac12 \vol(\Orb).$$
This will happen when 
$$ r = r_1 = \frac{1}{h(\Orb)}\Big(\log\vol(\Orb) - \log(2) -\log V(r_0)\Big) + r_0. $$
Therefore, for any $x,y \in \Orb$ we have $\mathrm{B}(r_1,x) \cap \mathrm{B}(r_1,y) \neq \emptyset$, and hence $\diam(\Orb) \le 2r_1$. By Lemma~\ref{lem2} and the value of $r_0$ provided by Lemma~\ref{lem1} we conclude that the terms $-\log V(r_0)$ and $r_0$ are much smaller than $\log \vol(\Orb)$. 
\end{proof}

\medskip

{\noindent\bf Acknowledgments.} I would like to thank the referee for careful proofreading of the manuscript and helpful comments. The author is partially supported by CNPq and FAPERJ research grants.

\end{document}